%
%
%

\documentclass{svmult-ddm}

\usepackage{mathptmx}       
\usepackage{helvet}         
\usepackage{courier}        
\usepackage{type1cm}        
%
\usepackage{graphicx}        

\usepackage[bottom]{footmisc}

\usepackage{amsmath, amssymb}
\usepackage{amsfonts}
\usepackage{amsbsy}
\usepackage{amscd}
\usepackage{amstext}
\usepackage{dsfont}
\usepackage[english]{babel}
\usepackage{color}
\usepackage{graphics}
\usepackage{epsfig}
\usepackage{subfigure}
\usepackage{wrapfig}
\usepackage{psfrag}
\usepackage{color}
\usepackage{url}
\usepackage{verbatim}
\usepackage{float}



\begin{document}

\title*{A Time-Dependent Dirichlet-Neumann Method for the Heat Equation}
\titlerunning{Dirichlet-Neumann Waveform Relaxation}
\author{Bankim C. Mandal \inst{1}}
\institute{Department of Mathematics, University of Geneva, Geneva, Switzerland.
\texttt{Bankim.Mandal@unige.ch}
}
%

%
\maketitle
\abstract*{We present a waveform relaxation version of the Dirichlet-Neumann
method for parabolic problem. Like the Dirichlet-Neumann method for
steady problems, the method is based on a non-overlapping spatial
domain decomposition, and the iteration involves subdomain solves
with Dirichlet boundary conditions followed by subdomain solves with
Neumann boundary conditions. However, each subdomain problem is now
in space and time, and the interface conditions are also time-dependent.
Using a Laplace transform argument, we show for the heat equation
that when we consider finite time intervals, the Dirichlet-Neumann
method converges, similar to the case of Schwarz waveform relaxation algorithms.
The convergence rate depends on the length of the subdomains as well
as the size of the time window. In this discussion, we only stick
to the linear bound. We illustrate our results with numerical
experiments.}
\section{Introduction}
We introduce a new Waveform Relaxation (WR) method based on the Dirichlet-Neumann algorithm and present convergence results for it in one space dimension. To solve time-dependent problems in parallel, one can either discretize in time to obtain a sequence of steady problems to which the domain decomposition algorithms are applied, or apply WR to the large system of ordinary differential equations (ODEs) obtained from spatial discretization. The credit of WR method goes to Picard \cite{mandalb_mini15_Picard} and Lindel\" of \cite{mandalb_mini15_Lind} for the solution of ODEs in the late 19th century. Lelarasmee, Ruehli and Sangiovanni-Vincentelli \cite{mandalb_mini15_LelRue} were the first to introduce the WR as a parallel method for the solution of ODEs. The main advantage of the WR method is that one can use different time steps in different space-time subdomains. The authors of \cite{mandalb_mini15_GanStu} and \cite{mandalb_mini15_GilKel} then generalized WR methods for
ODEs to solve time-dependent PDEs. Gander and Stuart \cite{mandalb_mini15_GanStu} showed
linear convergence of overlapping Schwarz WR iteration for the heat equation
on unbounded time intervals with a rate depending on the size of the overlap;
Giladi and Keller \cite{mandalb_mini15_GilKel} proved superlinear convergence of the Schwarz WR method with overlap for the convection-diffusion equation on bounded time intervals. 

The Dirichlet-Neumann method, which belongs to the class of substructuring
methods, is based on a non-overlapping spatial domain decomposition.
The iteration involves subdomain solves with Dirichlet boundary conditions,
followed by subdomain solves with Neumann boundary conditions. The
Dirichlet-Neumann algorithm was first considered for elliptic problems
by P. E. Bj\o rstad \& O. Widlund \cite{mandalb_mini15_BjPetter} and further discussed in \cite{mandalb_mini15_BramPas},
\cite{mandalb_mini15_MarQuar02} and \cite{mandalb_mini15_MarQuar01}. In this paper, we propose the Dirichlet-Neumann Waveform Relaxation (DNWR) method, a new Dirichlet-Neumann analogue of WR for the time-dependent problems. 
For presentation purposes, we derive our results for two subdomains in one spatial dimension. We discuss the method in the continuous setting to ensure the understanding of the asymptotic behavior of the method in the case of fine grids.

We consider the following initial boundary value problem (IBVP) for the heat equation as our guiding example on a bounded domain $\Omega\subset\mathbb{R},0<t<T$, 
 \begin{equation}
\begin{array}{rcll}
\frac{\partial u}{\partial t} & = & \Delta u+f(x,t), & x\in\Omega,0<t<T,\\
u(x,0) & = & u_{0}(x), & x\in\Omega,\\
u(x,t) & = & g(x,t), & x\in\partial\Omega,0<t<T.
\end{array}
\label{mandalb_mini_eq:model}
\end{equation}
 
 \section{The Dirichlet-Neumann Waveform Relaxation algorithm}

To define the Dirichlet-Neumann iterative method for
the model problem (\ref{mandalb_mini_eq:model}) on the domain $(-b,a)\times(0,T)$, we split the spatial domain $\Omega=(-b,a)$ into two non-overlapping
subdomains, the Dirichlet subdomain $\Omega_{1}=(-b,0)$ and the Neumann subdomain
$\Omega_{2}=(0,a)$, for $0<a,b<\infty$. The Dirichlet-Neumann Waveform Relaxation
algorithm consists of the following steps: given an initial guess $h^{0}(t),t\in(0,T)$
along the interface $\Gamma=\left\{ x=0\right\} $ and for $k=0,1,2,\ldots$,
do
\begin{equation}\label{mandalb_mini_eq:dnalgo1}
\begin{cases}
\partial_{t}u_{1}^{k+1}-\partial_{xx}u_{1}^{k+1}=f(x,t), & x\in\Omega_{1},\\
u_{1}^{k+1}(x,0)=u_{0}(x),  &x\in\Omega_{1},\\
u_{1}^{k+1}(-b,t)=g(-b,t),\\
u_{1}^{k+1}(0,t)=h^{k}(t),
\end{cases}
\begin{cases}
\partial_{t}u_{2}^{k+1}-\partial_{xx}u_{2}^{k+1}=f(x,t), & x\in\Omega_{2},\\
u_{2}^{k+1}(x,0)=u_{0}(x), & x\in\Omega_{2},\\
\partial_{x}u_{2}^{k+1}(0,t)=\partial_{x}u_{1}^{k+1}(0,t),\\
u_{2}^{k+1}(a,t)=g(a,t),
\end{cases}
\end{equation}
with the updating condition 
\begin{equation}\label{mandalb_mini_eq:dnalgo2}
h^{k+1}(t)=\theta u_{2}^{k+1}(0,t)+(1-\theta)h^{k}(t),
\end{equation}
$\theta$ being a positive relaxation parameter. The parameter $\theta$
is chosen in $(0,1]$ to accelerate convergence. As the main goal of the analysis is to study how the error $h^{k}(t)-u(0,t)$ converges to zero, by linearity it suffices to consider the homogeneous
problem, $f(x,t)=0$, $g(x,t)=0$, $u_0(x)=0$ in (1), and examine how $h^{k}(t)$
goes to zero as $k\rightarrow\infty$. 

\section{Convergence analysis and main results}

We analyze the DNWR algorithm using the Laplace transform method. The Laplace transform of a function $w(t)$, defined for all real numbers $t\in[0,\infty)$, is the function $\hat{w}(s)$, defined by
\[
\hat{w}(s)=\mathcal{L}\left\{ w(t)\right\} :=\int_{0}^{\infty}e^{-st}w(t)\, dt,
\]
(if the integral exists) $s$ being a complex variable. If $\mathcal{L}\left\{ w(t)\right\} =\hat{w}(s)$, then the inverse Laplace transform of $\hat{w}(s)$ is denoted by
\[
\mathcal{L}^{-1}\left\{ \hat{w}(s)\right\} :=w(t),\qquad t\geq0,
\]
which maps the Laplace transform of a function back to the original function. For more information on Laplace transforms, see \cite{mandalb_mini15_Church,mandalb_mini15_Oberhett}. We use hats to denote the Laplace transform of a function in time in the rest of the paper.
\vspace{0.2cm}\\
{\bf Analysis by Laplace transforms.} Applying a Laplace transform in time to (\ref{mandalb_mini_eq:dnalgo1}) and solving the
resulting ODEs yields the solutions:
 $\hat{u}_{1}^{k+1}(x,s)=\frac{\hat{h}^{k}(s)}{\sinh(b\sqrt{s})}\sinh\left\{ (x+b)\sqrt{s}\right\} $
and $\hat{u}_{2}^{k+1}(x,s)=\hat{h}^{k}(s)\frac{\coth(b\sqrt{s})}{\cosh(a\sqrt{s})}\sinh\{(x-a)\sqrt{s}\}.$
Now, evaluating $\hat{u}_{2}^{k+1}(x,s)$ at $x=0$ and inserting
it into the transformed updating condition (\ref{mandalb_mini_eq:dnalgo2}), we get for $k=0,1,2,\ldots$
$\hat{h}^{k+1}(s)=\left\{ 1-\theta-\theta\tanh(a\sqrt{s})\coth(b\sqrt{s})\right\} \hat{h}^{k}(s).$
Therefore, by induction we get
\begin{equation}\label{mandalb_mini_eq:update1}
\hat{h}^{k}(s)=\left\{ 1-\theta-\theta\tanh(a\sqrt{s})\coth(b\sqrt{s})\right\} ^{k}\hat{h}^{0}(s),\quad k=1,2,3,...
\end{equation}
\begin{theorem}
For the symmetric case, $a=b$ in (\ref{mandalb_mini_eq:dnalgo1})-(\ref{mandalb_mini_eq:dnalgo2}), the DNWR algorithm
converges linearly for $0<\theta<1$. Moreover, for $\theta=0.5$, it converges to the
exact solution in two iterations, independent of the size of the time window.\end{theorem}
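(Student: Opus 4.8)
The plan is to exploit the symmetry $a=b$ directly in the transformed update formula \eqref{mandalb_mini_eq:update1}. First I would substitute $b=a$ into the contraction factor and use the elementary identity $\tanh(a\sqrt{s})\coth(a\sqrt{s})=1$, valid for every $s$ in the region of convergence. This collapses the bracketed factor to
\[
1-\theta-\theta\tanh(a\sqrt{s})\coth(a\sqrt{s}) \;=\; 1-2\theta,
\]
which is remarkably \emph{independent of $s$}. Hence \eqref{mandalb_mini_eq:update1} reduces to $\hat{h}^{k}(s)=(1-2\theta)^{k}\,\hat{h}^{0}(s)$.

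Next I would return to the time domain. Since the multiplier $(1-2\theta)^{k}$ is a constant, linearity of the inverse Laplace transform gives immediately
\[
h^{k}(t)=(1-2\theta)^{k}\,h^{0}(t),\qquad t\in(0,T),
\]
so no convolution or contour argument is needed. For $0<\theta<1$ one checks $|1-2\theta|<1$, whence $\|h^{k}\|\le|1-2\theta|^{k}\,\|h^{0}\|$ in any norm on the time window, establishing linear convergence of the interface error $h^{k}(t)-u(0,t)=h^{k}(t)$ (recall we work with the homogeneous error problem). This also exhibits the claimed dependence on $\theta$ through the rate $|1-2\theta|$.

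For the special choice $\theta=\tfrac12$ the factor $1-2\theta$ vanishes, so $h^{1}(t)\equiv 0$ for all $t$, i.e. the interface trace equals the exact value $u(0,t)$ after a single update, independently of $T$. To explain the count of two iterations I would then observe that the subdomain solutions $u_{1}^{1},u_{2}^{1}$ produced in the first sweep ($k=0$) still carry the erroneous datum $h^{0}$; one further Dirichlet--Neumann sweep ($k=1$) with the now-exact interface condition $h^{1}=0$ reproduces the exact solution on both subdomains. Thus the interface is corrected in one update but the full space-time solution is recovered only after the second subdomain solve.

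I do not expect a genuine obstacle here: once the identity $\tanh\coth=1$ removes all $s$-dependence, the argument is essentially algebraic. The only points requiring a little care are the justification that a constant transform multiplier inverts to the same constant multiplier in time, and the precise bookkeeping of the iteration index so that the phrase \emph{two iterations} is correctly attributed to the subdomain solves rather than to the interface update alone.
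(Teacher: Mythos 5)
Your proposal is correct and follows essentially the same route as the paper: substitute $a=b$ so the factor in \eqref{mandalb_mini_eq:update1} collapses to the constant $1-2\theta$, back-transform to get $h^{k}(t)=(1-2\theta)^{k}h^{0}(t)$, note $|1-2\theta|<1$ for $0<\theta<1$, and observe that $\theta=1/2$ gives $h^{1}\equiv 0$ so one further subdomain solve yields the exact solution. Your extra remarks on inverting a constant multiplier and on the bookkeeping behind ``two iterations'' are correct elaborations of what the paper states more tersely.
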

\begin{proof}
For $a=b$, the equation (\ref{mandalb_mini_eq:update1}) reduces
to $\hat{h}^{k}(s)=(1-2\theta)^{k}\hat{h}^{0}(s),$ which upon back
transforming gives $h^{k}(t)=(1-2\theta)^{k}h^{0}(t)$. Thus, the convergence is linear for $\theta\neq0.5$. On the other hand, for $\theta=0.5$, $h^{1}(t)=0$. Therefore, one more iteration produces the desired solution on the whole domain.$\hfill\qed$
\end{proof}

The main area of concern for the rest of the paper is the analysis of the DNWR algorithm for $a\neq b$. If we define 
$$G(s):=\tanh(a\sqrt{s})\coth(b\sqrt{s})-1=\frac{\sinh((a-b)\sqrt{s})}{\cosh(a\sqrt{s})\sinh(b\sqrt{s})},$$
then the recurrence relation (\ref{mandalb_mini_eq:update1}) reduces to
\begin{equation}\label{mandalb_mini_eq:updatefinal}
\hat{h}^{k}(s)=\begin{cases}
\left\{ q(\theta)-\theta G(s)\right\} ^{k}\hat{h}^{0}(s), & \theta\neq1/2\\
(-1)^{k}2^{-k}G^{k}(s)\hat{h}^{0}(s), & \theta=1/2,
\end{cases}
\end{equation}
where $q(\theta)=1-2\theta.$ Note that for $\textrm{Re}(s)>0,$ $G(s)$
is\footnote {Assuming $s=re^{i\vartheta}$, $z=\sqrt{s}$, we can write for $b\geq a$,
$\bigl|s^{p}G(s)\bigr|\leq\bigl|\frac{s^{p}}{\cosh(az)}\bigr|\leq\frac{2r^{p}}{|e^{a\sqrt{r/2}}-e^{-a\sqrt{r/2}}|}\rightarrow0$,
as $r\rightarrow\infty$; and for $a>b$, $\bigl|s^{p}G(s)\bigr|\leq\bigl|\frac{s^{p}}{\sinh(bz)}\bigr|\leq\frac{2r^{p}}{|e^{b\sqrt{r/2}}-e^{-b\sqrt{r/2}}|}\rightarrow0$, as $r\rightarrow\infty$.%
} $\mathcal{O}(s^{-p})$ for every positive $p$. Therefore, by \cite[p.~178]{mandalb_mini15_Church}, $G(s)$ is
the Laplace transform of an analytic function $F_{1}(t)$ (in fact this
is the motivation in defining $G$). In general, define $F_{k}(t):=\mathcal{L}^{-1}\left\{ G^{k}(s)\right\} $ for $k=1,2,3,\ldots$. For $\theta$ not equal to $1/2$, $h^{k}$ cannot be expressed as a simple convolution of $h^{0}$ and an analytic function; thus, different techniques are required to analyze its behavior. This case will be treated in a future paper. 
 For $\theta=1/2$ and $t\in(0,T)$ we get from (\ref{mandalb_mini_eq:updatefinal})
\begin{equation}\label{mandalb_mini_eq:boundstep}
\bigl|h^{k}(t)\bigr|=\biggl|2^{-k}\int_{0}^{t}(-1)^{k}h^{0}(t-\tau)F_{k}(\tau)d\tau\biggr|\leq2^{-k}\parallel h^{0}\parallel_{L^{\infty}(0,T)}\int_{0}^{T}\Bigl|F_{k}(\tau)\Bigr|d\tau.
\end{equation}
So, we need to bound $\int_{0}^{T}\bigl|F_{k}(\tau)\bigr|d\tau$
to get an $L^{\infty}$ convergence estimate. We concentrate
on showing that $F_{1}(t)$ does not change signs both for the case $b<a$, in which $F_{1}(t)\geq0$, and for $b\geq a$, for which $F_{1}(t)\leq0$. Before we proceed further with the proof we need the following lemmas.
\begin{lemma}
Let, $w(t)$ be a continuous and $L^{1}$-integrable function on $(0,\infty)$
with $w(t)\geq0$ for all $t\geq0$. Assume $W(s)=\mathcal{L}\left\{ w(t)\right\} $.
Then, for $\tau>0,$
\[
\int_{0}^{\tau}|w(t)|dt\leq{\displaystyle \lim_{s\rightarrow0+}}W(s).
\]
\end{lemma}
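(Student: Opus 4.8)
The plan is to exploit the single hypothesis that is doing all the work here, namely $w(t)\ge 0$, which immediately removes the absolute value on the left-hand side: since $w$ is non-negative everywhere, $|w(t)|=w(t)$ and the quantity to bound is simply $\int_0^{\tau}w(t)\,dt$. The right-hand side, $\lim_{s\to 0+}W(s)$, should be identified with the full integral $\int_0^{\infty}w(t)\,dt$, after which the inequality becomes the trivial statement that a partial integral of a non-negative function is dominated by its total integral.

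The first step I would carry out is to evaluate the limit $\lim_{s\to 0+}W(s)=\lim_{s\to 0+}\int_0^{\infty}e^{-st}w(t)\,dt$. The clean way to do this is the Monotone Convergence Theorem: as $s$ decreases to $0$ along positive reals, the integrand $e^{-st}w(t)$ increases monotonically to $w(t)$ pointwise, precisely because $w(t)\ge 0$ makes $e^{-st}w(t)$ a non-decreasing function of the decreasing parameter $s$. Hence the limit and the integral may be interchanged, giving
\[
\lim_{s\to 0+}W(s)=\int_0^{\infty}w(t)\,dt,
\]
and the $L^{1}$-integrability of $w$ guarantees this is a finite number, so the limit genuinely exists.

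The second and final step is the comparison. For any $\tau>0$, using again that $w\ge 0$ so that the tail integral $\int_{\tau}^{\infty}w(t)\,dt$ is non-negative, I would write
\[
\int_0^{\tau}|w(t)|\,dt=\int_0^{\tau}w(t)\,dt\le \int_0^{\tau}w(t)\,dt+\int_{\tau}^{\infty}w(t)\,dt=\int_0^{\infty}w(t)\,dt=\lim_{s\to 0+}W(s),
\]
which is exactly the claimed bound. The continuity assumption on $w$ is not strictly needed for this chain of inequalities, but it ensures the integrals are well defined in the Riemann sense and matches the regularity of the functions $F_k$ to which the lemma will later be applied.

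There is no serious obstacle in this argument; the only point requiring a word of justification is the interchange of limit and integral, and I expect that to be the main (indeed only) technical step. It is handled entirely by monotone convergence thanks to the sign condition on $w$, so no dominated-convergence estimate or uniform integrability argument is necessary. The crucial hypothesis throughout is the non-negativity of $w$: it is what both eliminates the absolute value and forces the monotonicity that makes the limit computable, and without it the stated inequality would generally fail.
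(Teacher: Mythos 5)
Your proposal is correct and follows essentially the same route as the paper: drop the absolute value using $w\ge 0$, bound the partial integral by the full integral, and identify $\int_0^\infty w$ with $\lim_{s\to 0+}W(s)$ by interchanging limit and integral. The only (immaterial) difference is that you justify the interchange by monotone convergence, whereas the paper invokes dominated convergence with $w\in L^1$ as the dominating function; both are valid.
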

\begin{proof}
Using the definition of Laplace transform, we have
\begin{align*}
\int_{0}^{\tau}|w(t)|dt =\int_{0}^{\tau}w(t)dt &\leq\int_{0}^{\infty}w(t)dt\\
=\int_{0}^{\infty}{\displaystyle \lim_{s\rightarrow0+}}e^{-st}w(t)dt&={\displaystyle \lim_{s\rightarrow0+}}\int_{0}^{\infty}e^{-st}w(t)dt \;\mbox{(by Dominated Conv. Theorem)}\\
&={\displaystyle \lim_{s\rightarrow0+}}W(s).\quad\quad\quad\quad\quad\quad\quad\qed
\end{align*}
\end{proof}
\begin{lemma}
Let $\beta>\alpha\geq0$ and $s$ be a complex variable. Then, for
$t\in(0,\infty)$ 
\[
\varphi(t):=\mathcal{L}^{-1}\left\{ \frac{\sinh(\alpha\sqrt{s})}{\sinh(\beta\sqrt{s})}\right\} \geq0\;;\;\:\psi(t):=\mathcal{L}^{-1}\left\{ \frac{\cosh(\alpha\sqrt{s})}{\cosh(\beta\sqrt{s})}\right\} \geq0.
\]
\end{lemma}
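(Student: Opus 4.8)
The plan is to recognize each of $\hat\varphi(s)=\sinh(\alpha\sqrt s)/\sinh(\beta\sqrt s)$ and $\hat\psi(s)=\cosh(\alpha\sqrt s)/\cosh(\beta\sqrt s)$ as the time-Laplace transform of the trace, at the interior point $x=\alpha$, of a solution to a heat equation on the short interval $(0,\beta)$ driven by non-negative boundary data, and then to invoke the parabolic maximum principle. Concretely, for $\varphi$ I would take $\partial_t v=\partial_{xx}v$ on $(0,\beta)\times(0,\infty)$ with $v(x,0)=0$, $v(0,t)=0$, $v(\beta,t)=g(t)$. Transforming in time annihilates the (zero) initial data and gives $\hat v_{xx}=s\hat v$ with $\hat v(0,s)=0$, $\hat v(\beta,s)=\hat g(s)$, whose solution is $\hat v(x,s)=\hat g(s)\sinh(x\sqrt s)/\sinh(\beta\sqrt s)$. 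Evaluating at $x=\alpha$ yields $\hat v(\alpha,s)=\hat g(s)\hat\varphi(s)$, i.e. $v(\alpha,\cdot)=g*\varphi$. For $\psi$ the only change is to impose the no-flux condition $\partial_x v(0,t)=0$ in place of $v(0,t)=0$; the transformed solution is then $\hat v(x,s)=\hat g(s)\cosh(x\sqrt s)/\cosh(\beta\sqrt s)$, so that $v(\alpha,\cdot)=g*\psi$.

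Positivity then follows from the maximum principle. If $g\ge0$, then in the Dirichlet case the parabolic minimum principle applied to $v$ on $(0,\beta)\times(0,\tau)$ forces $v\ge0$ throughout, since the initial datum and both boundary traces are non-negative. In the Neumann case I would remove the no-flux boundary by an even reflection: extending $v$ to $(-\beta,\beta)$ via $\tilde v(x,t)=v(|x|,t)$ produces a genuine solution of the heat equation, because $\partial_x v(0,t)=0$ makes the extension $C^1$, hence $C^2$, across $x=0$, with Dirichlet data $g\ge0$ at both ends and zero initial data; the Dirichlet minimum principle again gives $\tilde v\ge0$, and in particular $v\ge0$. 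In either case $(g*\varphi)(t)=v(\alpha,t)\ge0$ (respectively $(g*\psi)(t)\ge0$) for every admissible $g\ge0$.

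It remains to pass from non-negativity of the convolutions to non-negativity of the kernels. Here I would first note that $\hat\varphi$ and $\hat\psi$ decay like $e^{-(\beta-\alpha)\sqrt s}$ as $\mathrm{Re}(s)\to\infty$ (using $\beta>\alpha$), so by the same inversion result invoked above for $G$ they are transforms of genuine, smooth functions on $(0,\infty)$; in particular $\varphi,\psi$ are continuous. Choosing the non-negative approximate identity $g_\varepsilon=\varepsilon^{-1}\mathbf{1}_{[0,\varepsilon]}$, one has $(g_\varepsilon*\varphi)(t)=\varepsilon^{-1}\int_{t-\varepsilon}^{t}\varphi(\sigma)\,d\sigma\to\varphi(t)$ as $\varepsilon\to0^+$ at every $t>0$ by continuity, while each left-hand side is $\ge0$; hence $\varphi(t)\ge0$, and identically $\psi(t)\ge0$.

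The hard part is the rigorous handling of the boundary forcing: the cleanest identity $v(\alpha,t)=\mathcal{L}^{-1}\{\hat\varphi\}$ corresponds to the singular choice $g=\delta$, so one must either work with genuine non-negative $g$ and recover the kernel by the approximation above, or justify the minimum principle directly for a solution whose boundary trace is a limit of non-negative data. A secondary point to watch is the regularity needed to apply the minimum principle up to $t=0$, which is why I prefer smooth $g\ge0$ with $g(0)=0$ over a rough approximation of $\delta$. I note in passing that the method of images gives the explicit representation $\varphi(t)=\sum_{n\ge0}\bigl[L_{(2n+1)\beta-\alpha}(t)-L_{(2n+1)\beta+\alpha}(t)\bigr]$ with $L_c(t)=\tfrac{c}{2\sqrt\pi}\,t^{-3/2}e^{-c^2/(4t)}\ge0$; this confirms existence but is poorly suited to proving positivity, since the individual differences change sign in $t$, so it is the probabilistic/PDE interpretation through the maximum principle that actually delivers the result.
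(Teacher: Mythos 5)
Your proposal is correct and follows essentially the same route as the paper: both interpret $\hat\varphi$ and $\hat\psi$ as the Laplace-transformed solution operators of heat IBVPs with non-negative boundary forcing, invoke the maximum principle, and recover the sign of the kernel by testing the convolution against indicator functions (your approximate-identity limit versus the paper's contradiction argument are the same device). Your Neumann-condition-plus-even-reflection treatment of $\psi$ is just a repackaging of the paper's symmetric problem on $(-\beta,\beta)$ with equal Dirichlet data at both ends.
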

\begin{proof}
First, let us consider the following IBVP for the heat equation on $(0,\beta)$: $u_{t}-u_{xx}=0,\, u(x,0)=0,\: u(0,t)=0,\: u(\beta,t)=g(t).$ Therefore, for $g$ non-negative, $u(\alpha,t)$ is also non-negative for all
$t>0$, thanks to the maximum principle. Now using the Laplace transform method, we get the solution along $x=\alpha$
as 
\[
\hat{u}(\alpha,s)=\hat{g}(s)\frac{\sinh(\alpha\sqrt{s})}{\sinh(\beta\sqrt{s})}\quad\Longrightarrow\quad u(\alpha,t)=\int_{0}^{t}g(t-\tau)\varphi(\tau)d\tau.
\]
We prove the result by contradiction: suppose $\varphi(t_{0})<0$ for some $t_{0}>0$. Then
by continuity of $\varphi$, there exists $\delta>0$ such that $\varphi(\tau)<0,$
for $\tau\in(t_{0}-\delta,t_{0}+\delta)$. Now for $t>t_0+\delta$, we choose $g$ as 
\[
g(\varsigma)=\begin{cases}
1, & \varsigma\in\left(t-t_{0}-\delta,t-t_{0}+\delta\right)\\
0, & \textrm{else}.
\end{cases}
\]
Then $u(\alpha,t)=\int_{t_{0}-\delta}^{t_{0}+\delta}g(t-\tau)\varphi(\tau)d\tau=\int_{t_{0}-\delta}^{t_{0}+\delta}\varphi(\tau)d\tau<0$,
a contradiction. This proves $\varphi$ to be non-negative. For $\psi$, applying the Laplace transform method to the IBVP for the heat equation $u_{t}-u_{xx}=0,\: u(x,0)=0,\: u(-\beta,t)=g(t),\: u(\beta,t)=g(t)$ yields the solution along $x=\alpha$ as: $\hat{u}(\alpha,s)=\hat{g}(s)\frac{\cosh(\alpha\sqrt{s})}{\cosh(\beta\sqrt{s})}$. Thus, a similar argument as in the first case proves that $\psi$ is also non-negative.$\hfill\qed$ 
\end{proof}

\begin{theorem}
\textbf{(Linear convergence bound for the Heat equation)} Let $\theta=1/2$. For
$T>0$, the error of the Dirichlet-Neumann Waveform Relaxation (DNWR)
algorithm satisfies 
\[
\parallel h^{k}\parallel_{L^{\infty}(0,T)}\leq\left(\frac{|b-a|}{2b}\right)^{k}\parallel h^{0}\parallel_{L^{\infty}(0,T)}.
\] We therefore have a contraction if $a<3b.$
\end{theorem}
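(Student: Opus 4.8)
The plan is to start from the estimate (\ref{mandalb_mini_eq:boundstep}), which reduces everything to bounding $\int_0^T|F_k(\tau)|\,d\tau$, and to exploit the multiplicative structure $G^k$ in the transform. Since $\widehat{F_k}=G^k=\widehat{F_1}^{\,k}$, the function $F_k$ is the $k$-fold convolution $F_1*\cdots*F_1$. The footnote guarantees $G^k=\mathcal{O}(s^{-p})$ for every $p$, so each $F_k$ is continuous and integrable on $(0,\infty)$, and this decay justifies the limiting operations below. The crux of the argument is to prove that $F_1$ has a fixed sign, because a convolution of sign-definite functions is again sign-definite: if $F_1\ge0$ then $F_k\ge0$, while if $F_1\le0$ then $F_k$ has sign $(-1)^k$. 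In either case $|F_k|=\pm F_k$ is the inverse transform of $\pm G^k$.

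Granting the sign of $F_1$, the estimate follows quickly. I would apply the first lemma to the nonnegative function $|F_k|$, whose Laplace transform is $\varepsilon^k G^k(s)$ with $\varepsilon=\operatorname{sign}(F_1)$, to obtain $\int_0^T|F_k(\tau)|\,d\tau\le\lim_{s\to0+}\varepsilon^k G^k(s)=\bigl(\varepsilon\lim_{s\to0+}G(s)\bigr)^k$. A Taylor expansion of $G(s)=\sinh((a-b)\sqrt s)/(\cosh(a\sqrt s)\sinh(b\sqrt s))$ at $s=0$ gives $\lim_{s\to0+}G(s)=(a-b)/b$, and since $\varepsilon=\operatorname{sign}(a-b)$ this product equals $(|a-b|/b)^k$. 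Substituting into (\ref{mandalb_mini_eq:boundstep}) yields $|h^k(t)|\le 2^{-k}\|h^0\|_{L^\infty(0,T)}(|a-b|/b)^k=(|b-a|/2b)^k\|h^0\|_{L^\infty(0,T)}$ for every $t\in(0,T)$, hence the stated bound; the contraction condition $|b-a|/2b<1$ reduces, because $a,b>0$, to $a<3b$.

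It remains to establish the sign of $F_1$, and this is where the second lemma enters. When $b\ge a$ I would write $-G(s)=\dfrac{\sinh((b-a)\sqrt s)}{\sinh(b\sqrt s)}\cdot\dfrac{1}{\cosh(a\sqrt s)}$; both factors are transforms of nonnegative functions by the second lemma (the first because $b-a<b$, the second being the case $\alpha=0$), so their product is too, and $F_1\le0$. The same factorization $G(s)=\dfrac{\sinh((a-b)\sqrt s)}{\sinh(b\sqrt s)}\cdot\dfrac{1}{\cosh(a\sqrt s)}$ settles $b<a$ as long as $a-b<b$, i.e. $a<2b$, giving $F_1\ge0$ there.

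The main obstacle is precisely the remaining range $b<a$ with $a\ge 2b$: then the numerator argument $a-b$ exceeds $b$, the second lemma no longer applies to the first factor, and the alternative grouping $\sinh((a-b)\sqrt s)/\cosh(a\sqrt s)$ amounts to a half-order time derivative of a nonnegative quantity, which need not keep its sign; so a naive factorization breaks down. I expect this case to require a dedicated argument rather than a formal manipulation---either an extension of the second lemma controlling $\mathcal{L}^{-1}\{\sinh(\mu\sqrt s)/\sinh(b\sqrt s)\}$ for $\mu\ge b$, or a maximum-principle argument applied directly to the composite Dirichlet-then-Neumann subdomain solve that produces $G$, tracking the sign through the interface flux. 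Establishing sign-definiteness of $F_1$ on this range is the single step I would budget the most effort for; once it is in hand, the remainder of the proof is the short computation above.
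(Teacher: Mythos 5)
Your reduction of the theorem to the sign-definiteness of $F_1$, your treatment of $b\ge a$ and of $b<a\le 2b$ via Lemma 2, and the closing computation with Lemma 1 and $\lim_{s\to0+}G(s)=(a-b)/b$ all coincide with the paper's proof. The genuine gap is exactly the case you flag and leave open, $a\ge 2b$, and of your two suggested escape routes one cannot work as stated: for $\mu>b$ the ratio $\sinh(\mu\sqrt s)/\sinh(b\sqrt s)$ grows like $e^{(\mu-b)\sqrt s}$ as $s\to+\infty$, so it is not the Laplace transform of an integrable function at all and no extension of Lemma 2 can assign it a sign; the factor $1/\cosh(a\sqrt s)$ must be kept in play throughout the argument.

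The missing ingredient is a telescoping decomposition, not a new analytic tool. With $m=\lfloor a/b\rfloor$, apply the addition formula
\[
\frac{\sinh((a-jb)\sqrt s)}{\sinh(b\sqrt s)}=\frac{\sinh((a-(j+1)b)\sqrt s)}{\sinh(b\sqrt s)}\cosh(b\sqrt s)+\cosh((a-(j+1)b)\sqrt s)
\]
recursively for $j=1,\dots,m-1$, which turns $G$ into
\[
\frac{\sinh((a-mb)\sqrt s)}{\sinh(b\sqrt s)}\cdot\frac{\cosh^{m-1}(b\sqrt s)}{\cosh(a\sqrt s)}+\sum_{j=0}^{m-2}\frac{\cosh^{j}(b\sqrt s)\cosh((a-(j+2)b)\sqrt s)}{\cosh(a\sqrt s)},
\]
where now $0<a-mb\le b$. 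Expanding the powers of $\cosh$ by the power-reduction formula $\cosh^{n}\theta=\sum_{l=0}^{n}A_{l}^{n}\cosh(l\theta)$ with $A_{l}^{n}\ge0$ and $\sum_{l}A_{l}^{n}=1$, and the remaining products by the product-to-sum identity, exhibits $G(s)$ as a nonnegative combination of terms of the form $\frac{\sinh(\alpha\sqrt s)}{\sinh(b\sqrt s)}\cdot\frac{\cosh(\mu\sqrt s)}{\cosh(a\sqrt s)}$ and $\frac{\cosh(\nu\sqrt s)}{\cosh(a\sqrt s)}$ with $\alpha\le b$ and $|\mu|,|\nu|<a$ (one checks $(j-l+2)b\le mb<a$ and $|a-(j+l+2)b|<a$ for the relevant indices, $\cosh$ being even). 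Each such term is the transform of a nonnegative function by Lemma 2, so $F_1\ge0$ on this range as well, and the rest of your argument goes through unchanged.
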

\begin{proof}
By virtue of (\ref{mandalb_mini_eq:boundstep}), it is sufficient to bound $\int_{0}^{T}\bigl|F_{k}(\tau)\bigr|d\tau$ for both $b\geq a$ and $a>b$, where $F_{k}(t)=\mathcal{L}^{-1}\left\{ G^{k}(s)\right\} $.
Suppose $b\geq a>0.$ We have $\mathcal{L}\left\{-F_1(t)\right\} =\frac{\sinh((b-a)\sqrt{s})}{\sinh(b\sqrt{s})}\cdot \frac{1}{\cosh(a\sqrt{s})}.$
So by Lemma 2 and the fact that the convolution
of two positive functions is positive, $-F_{1}(t)$ is positive. Thus, by induction and with the same arguments,
$(-1)^{k}F_{k}(t)\geq0$ for all $t$. Therefore by Lemma 1
\begin{equation}\label{mandalb_mini_eq:linbnd1}
\int_{0}^{T}\bigl|(-1)^{k}F_{k}(\tau)\bigr|d\tau\leq{\displaystyle \lim_{s\rightarrow0+}}(-1)^{k}G^{k}(s)=\left(\frac{b-a}{b}\right)^{k}.
\end{equation}
Now let $a>b>0$. We claim that $F_{1}(t)$ is positive.
If $a-b\leq b$, then we get the positivity by Lemma 2. If
this is not the case, then take the integer $m=\lfloor a/b \rfloor$ so that $mb<a\leq(m+1)b$. Then, recursively applying the identity  
$$\frac{\sinh((a-jb)\sqrt{s})}{\sinh(b\sqrt{s})}=\frac{\sinh((a-(j+1)b)\sqrt{s})}{\sinh(b\sqrt{s})}\cosh(b\sqrt{s})+\cosh((a-(j+1)b)\sqrt{s})$$ for $j=1,\ldots,m-1$, we obtain
\begin{multline*}
\frac{\sinh((a-b)\sqrt{s})}{\cosh(a\sqrt{s})\sinh(b\sqrt{s})}=\frac{\sinh((a-mb)\sqrt{s})}{\sinh(b\sqrt{s})}.\frac{\cosh^{m-1}(b\sqrt{s})}{\cosh(a\sqrt{s})}\\
+{\displaystyle \sum_{j=0}^{m-2}}\frac{\cosh^{j}(b\sqrt{s})\cosh\left((a-(j+2)b)\sqrt{s}\right)}{\cosh(a\sqrt{s})}.
\end{multline*}
Applying the binomial theorem to $\cosh\theta=\left(e^{\theta}+e^{-\theta}\right)/2$
we have the power-reduction formula 
\[
\cosh^{n}\theta=\begin{cases}
\frac{2}{2^{n}}{\displaystyle \sum_{l=0}^{\frac{n-1}{2}}}\binom{n}{l}\cosh\left((n-2l)\theta\right), & n\:\:\textrm{odd},\\
\frac{1}{2^{n}}\binom{n}{n/2}+\frac{2}{2^{n}}{\displaystyle \sum_{l=0}^{\frac{n}{2}-1}}\binom{n}{l}\cosh\left((n-2l)\theta\right), & n\:\:\textrm{even},
\end{cases}
\]
so that we can write $\cosh^{n}\theta={\displaystyle \sum_{l=0}^{n}}A_{l}^{n}\cosh\left(l\theta\right)$
with ${\displaystyle \sum_{l=0}^{n}}A_{l}^{n}=1$ and $A_{l}^{n}\geq 0$. Therefore, we have
\begin{multline*}
G(s)=\frac{\sinh((a-b)\sqrt{s})}{\cosh(a\sqrt{s})\sinh(b\sqrt{s})}=\frac{\sinh((a-mb)\sqrt{s})}{\sinh(b\sqrt{s})}{\displaystyle \sum_{l=0}^{m-1}}A_{l}^{m-1}\frac{\cosh(lb\sqrt{s})}{\cosh(a\sqrt{s})}\\
+{\displaystyle \sum_{j=0}^{m-2}}{\displaystyle \sum_{l=0}^{j}}\frac{A_{l}^{j}}{2}\left\{ \frac{\cosh\left((a-(j+l+2)b)\sqrt{s}\right)}{\cosh(a\sqrt{s})}+\frac{\cosh\left((a-(j-l+2)b)\sqrt{s}\right)}{\cosh(a\sqrt{s})}\right\} ,
\end{multline*}
where $\cosh^{j}\theta={\displaystyle \sum_{l=0}^{j}}A_{l}^{j}\cosh\left(l\theta\right)$. Note that $a-mb\leq b$, $(j-l+2)b\leq mb<a$ and $\left|a-(j+l+2)b\right|< a$
for $0\leq j,l\leq m-2$ and $\cosh$ is an even function. Thus by Lemma 2, each
term in the above expression is the Laplace transform of a positive
function. Hence $F_{1}(t)$ is positive, and therefore the convolution of $k$ $F_{1}$'s (i.e. $F_{k}(t)$) is also positive.
We have ${\displaystyle \lim_{s\rightarrow0+}}G(s)={\displaystyle \lim_{s\rightarrow0+}}\frac{\sinh((a-b)\sqrt{s})}{\cosh(a\sqrt{s})\sinh(b\sqrt{s})}=\frac{a-b}{b},$ and so by Lemma 1 
\begin{equation}\label{mandalb_mini_eq:linbnd2}
\int_{0}^{T}\bigl|F_{k}(\tau)\bigr|d\tau=\int_{0}^{T}F_{k}(\tau)d\tau\leq{\displaystyle \lim_{s\rightarrow0+}}G^{k}(s)=\left({\displaystyle \lim_{s\rightarrow0+}}G(s)\right)^{k}=\left(\frac{a-b}{b}\right)^{k}.
\end{equation}
The result follows by inserting the estimates (\ref{mandalb_mini_eq:linbnd1}) and (\ref{mandalb_mini_eq:linbnd2}) into (\ref{mandalb_mini_eq:boundstep}). $\hfill\qed$
\end{proof}
\section{Numerical Experiments}
We perform experiments to measure the actual convergence rate of the DNWR algorithm for the problem
\[
\begin{cases}
\frac{\partial u}{\partial t}-\frac{\partial^{2}u}{\partial x^{2}}=-e^{-t-x^{2}}, & x\in(-3,2),\\
u(x,0)=e^{-2x}, & x\in(-3,2),\\
u(-3,t)=e^{-2t}=u(2,t), & t>0.
\end{cases}
\] 
To solve the equation using the Dirichlet-Neumann algorithm, we discretize 
the Laplacian using centered finite differences in space and backward Euler
in time on a grid with $\Delta x=2\times 10^{-2}$
and $\Delta t=4\times 10^{-4}$. For the numerical experiments
we split the spatial domain into two non-overlapping subdomains $\left[-3,0\right]$
and $\left[0,2\right]$, so that $b=3$ and $a=2$ in
(\ref{mandalb_mini_eq:dnalgo1})-(\ref{mandalb_mini_eq:dnalgo2}). Thus this is the case when the Dirichlet subdomain is bigger than the Neumann subdomain. The numerical results are similar for the case when the Neumann domain is larger than the Dirichlet one. We test the algorithm by choosing  $h^{0}(t)=t$, $t\in (0,T]$ as an initial guess. Figure 1 gives the error reduction curves for different values of the parameter $\theta$ for $T=2$ in $(a)$ and $T=200$ in $(b)$. Note that, for a small time window, we get linear convergence for all the parameters, except for $\theta=0.5$ which corresponds to superlinear convergence. 
\begin{figure}[H]\begin{centering}
\mbox{\subfigure[Short time window]{\includegraphics[width=5.8cm,height=3.5cm]{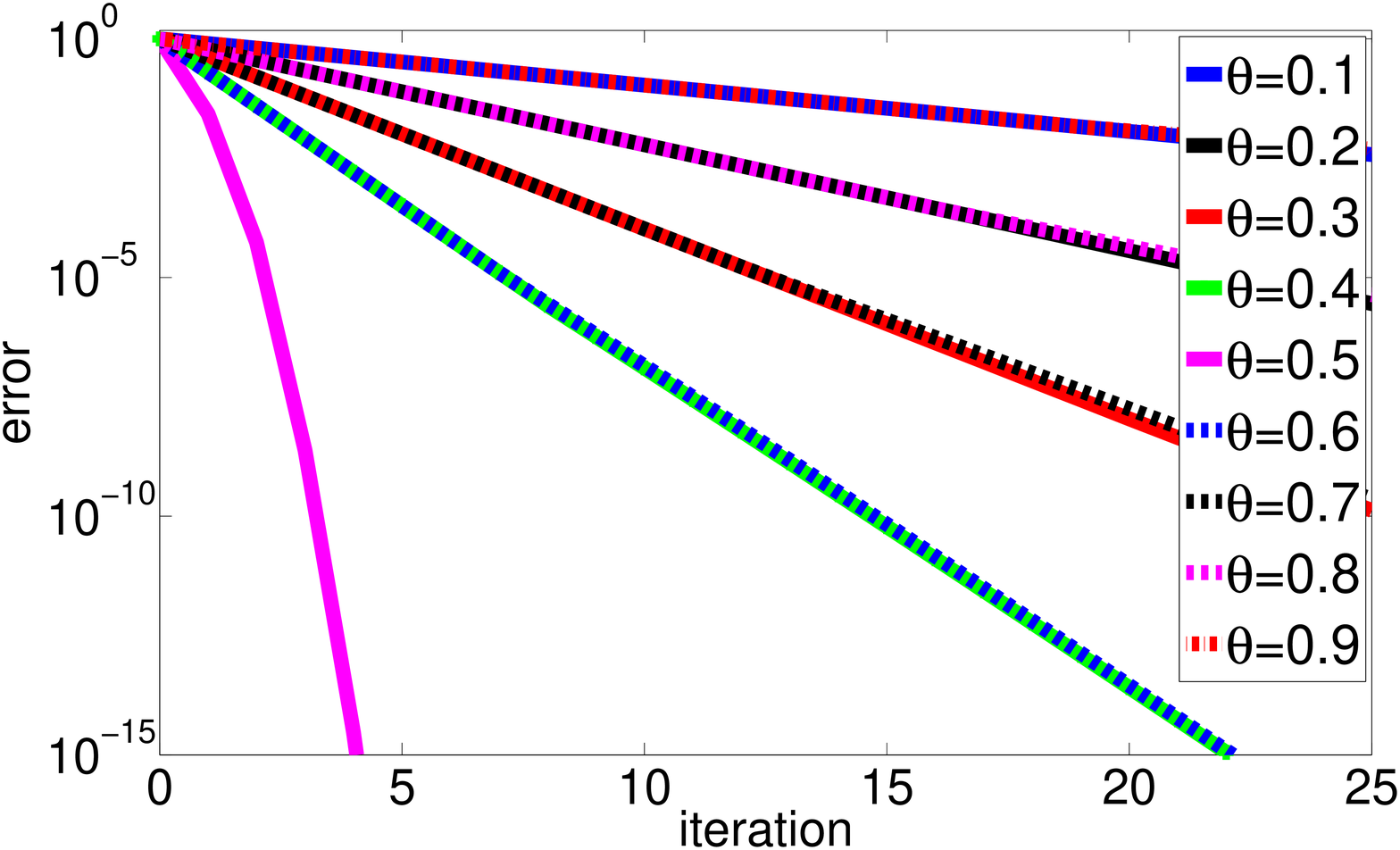}}\quad
\subfigure[Large time window]{\includegraphics[width=5.8cm,height=3.5cm]{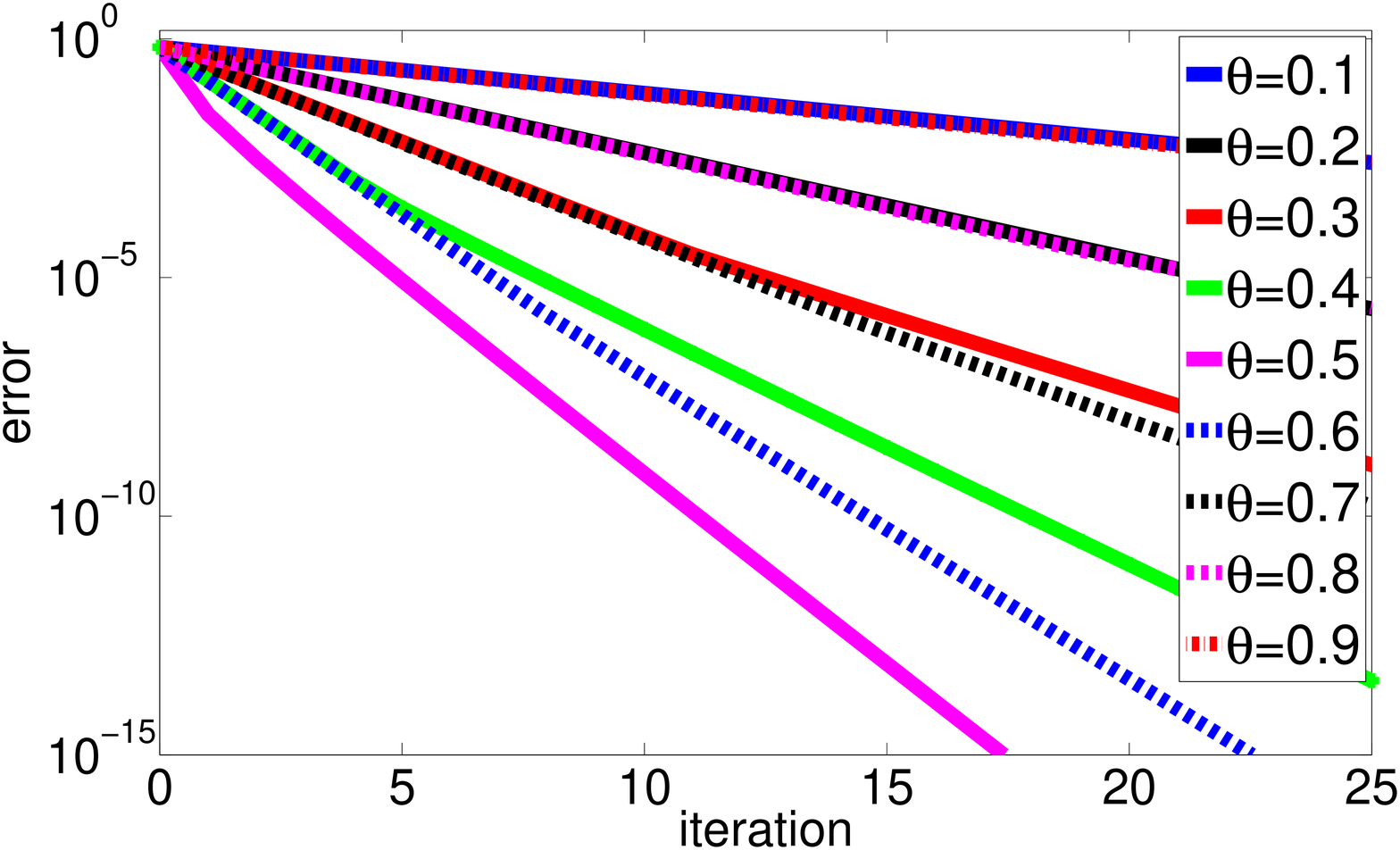}}}
\par\end{centering}
\caption{Convergence for various parameters; left: short time window, right: large time window.}
\end{figure}
\noindent For a large time window, we always observe linear convergence. We now plot the linear bound for the convergence rate in case of $\theta=1/2$ as shown in Theorem 2. The theorem provides a $T$-independent theoretical bound of the error for this special relaxation parameter and this is also valid for large time windows. Eventually, a more refined analysis will give a superlinear bound shown in (\ref{mandalb_mini_eq:supbnd1})-(\ref{mandalb_mini_eq:supbnd2}), dependent on $T$ and the lengths of the subdomains (see \cite{mandalb_mini15_GKM}). Figure 2 gives a comparison between the theoretical error for the continuous model problem (calculated using inverse Laplace transforms), numerical error for the
discretized problem, linear bound and the superlinear bound for $a=2,b=3$ and various $T$'s. We can observe that the error curves seem to approach the linear bound as $T$ increases.
 \begin{figure}[H]\begin{centering}
\mbox{\subfigure{\includegraphics[width=6.1cm,height=4.1cm]{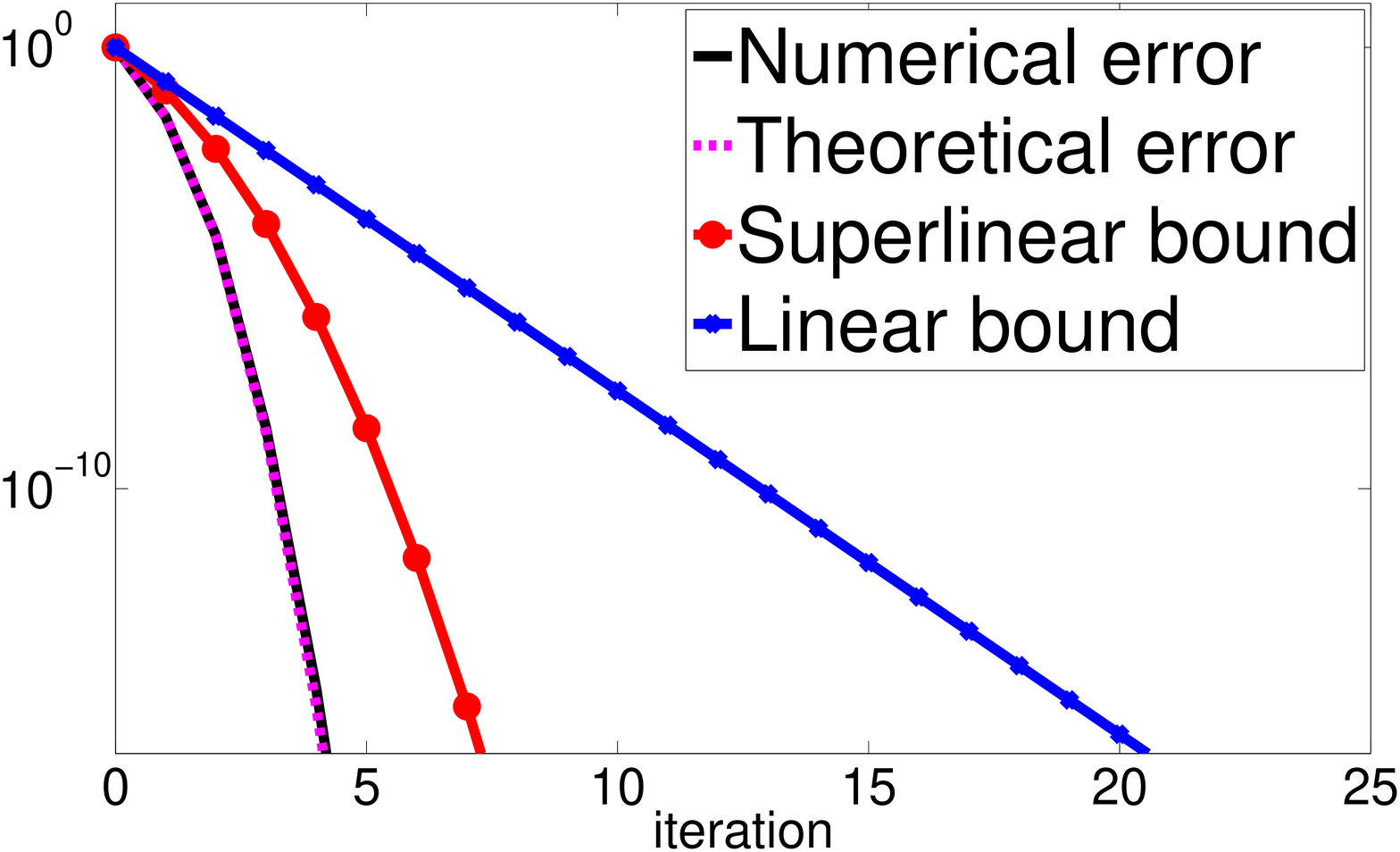}}\quad
\subfigure{\includegraphics[width=6.1cm,height=4.1cm]{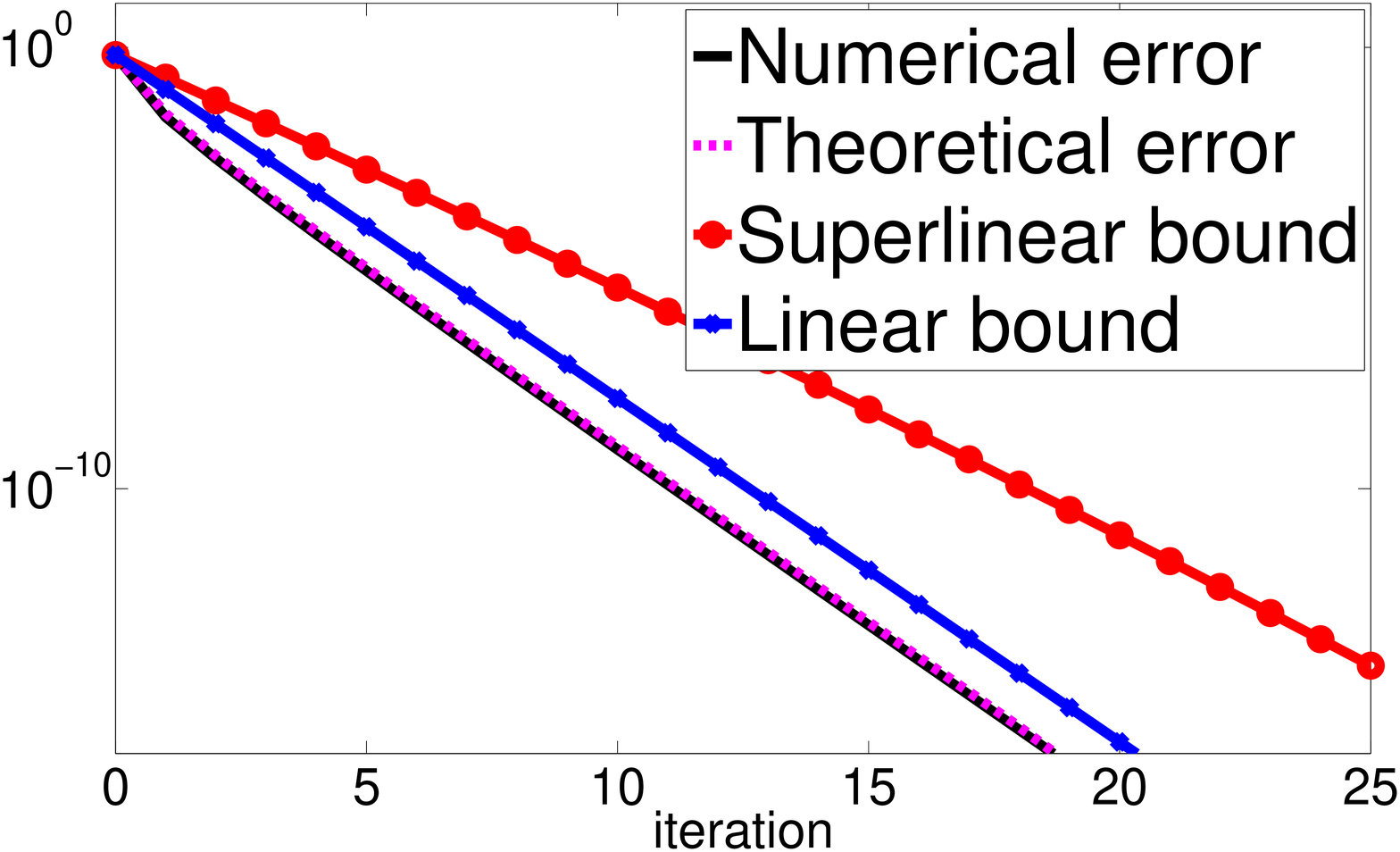}}}
\par\end{centering}
\caption{Bounds for various times, $b\geq a$; in particular $a<3b$. Left: $T=2$, right: $T=200$ }
\end{figure}

\section{Conclusions and further results}
We proved convergence of the proposed DNWR algorithm in the symmetric case. For unequal
subdomain lengths and for a particular choice of relaxation parameter, we presented a linear error estimate that is valid for both bounded and unbounded time intervals. In fact, Figure 2 suggests that the method converges superlinearly. 
\begin{wrapfigure}{r}{0.5\textwidth}
  \vspace{-20pt}
  \begin{center}
    \includegraphics[width=0.5\textwidth]{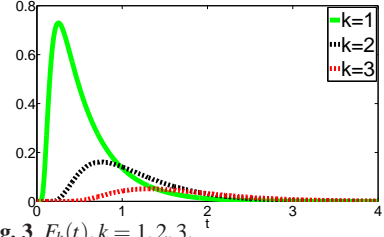}
  \end{center}
  \vspace{-20pt}
  \centering \caption{$F_k(t), k=1,2,3.$}
  \vspace{-10pt}
\end{wrapfigure} 
 To prove this, one has to consider two different cases: Dirchlet subdomain bigger than Neumann subdomain ($b\geq a$) and the other way around. Figure 3 shows $F_{k}(t)$ for $k=1,2,3$; we see that the curves shift to the right and at the same time, the peak decreases as $k$ increases. So, if one only considers a small time window, the peak will eventually exit the time window for $k$ large enough and its contribution will be vanishingly small in the expression (\ref{mandalb_mini_eq:updatefinal}). This is the intuitive idea to get superlinear convergence for $\theta=1/2$ in small time windows. A detailed analysis, which is too long for this short paper, in \cite{mandalb_mini15_GKM} leads to the following superlinear convergence estimates for the small time window $(0,T)$:
\begin{equation}\label{mandalb_mini_eq:supbnd1}
\parallel h^{k}\parallel_{L^{\infty}(0,T)}\leq\left(\frac{b-a}{b}\right)^{k}\textrm{erfc}\left(\frac{ka}{2\sqrt{T}}\right)\parallel h^{0}\parallel_{L^{\infty}(0,T)}, \quad \mbox{for}\; b\geq a,
\end{equation}
and \begin{equation}\label{mandalb_mini_eq:supbnd2}\parallel h^{2k}\parallel_{L^{\infty}(0,T)}\leq\left\{ \frac{\sqrt{2}}{1-e^{-\frac{2k+1}{\sigma}}}\right\} ^{2k}e^{-k^{2}/\sigma}\parallel h^{0}\parallel_{L^{\infty}(0,T)}, \quad \mbox{for} \; b<a,
\end{equation}
where $\sigma=T/b^{2}.$ We are also working on a generalization of the algorithm to higher dimensions.
\begin{acknowledgement}
I would like to express my gratitude to Prof. Martin J. Gander and Dr. Felix Kwok for their constant support and stimulating suggestions.
\end{acknowledgement}

\nocite{*}


\end{document}